\newcommand{\sect}[1]{\section{#1}\setcounter{equation}{0}}
\font\mbn=msbm10 scaled \magstep1
\font\mbs=msbm7 scaled \magstep1
\font\mbss=msbm5 scaled \magstep1
\newcommand{\Di}      {\mathbb{D}}
\newcommand\Co           {{\mathbb C}}
\newtheorem{Th}{Theorem}[section]
\newtheorem{Lm}[Th]{Lemma}
\newtheorem{C}[Th]{Corollary}
\newtheorem{R}[Th]{Remark}
\newtheorem*{Prop}{Proposition}
\newtheorem*{Theo}{Theorem}
\begin{document}
\title[Topology of the Maximal Ideal Space of $H^\infty$ Revisited]{Topology of the Maximal Ideal Space of $H^\infty$ Revisited}

\author{Alexander Brudnyi} 
\address{Department of Mathematics and Statistics\newline
\hspace*{1em} University of Calgary\newline
\hspace*{1em} Calgary, Alberta\newline
\hspace*{1em} T2N 1N4}
\email{albru@math.ucalgary.ca}
\keywords{Maximal ideal space of $H^\infty$, Gleason part, Carleson construction, projective freeness, Freudenthal compactification}
\subjclass[2010]{Primary 30D55. Secondary 30H05.}

\thanks{Research supported in part by NSERC}

\begin{abstract}
Let $M(H^\infty)$ be the maximal ideal space of the Banach algebra $H^\infty$ of bounded holomorphic functions on the unit disk $\Di\subset\Co$. We prove that $M(H^\infty)$ is homeomorphic to the Freudenthal compactification $\gamma(M_a)$ of the set $M_a$ of 
all non-trivial (analytic disks) Gleason parts of $M(H^\infty)$. Also, we give alternative proofs of important results of Su\'{a}rez asserting that the set $M_s$ of trivial (one-pointed) Gleason parts of $M(H^\infty)$ is totally disconnected and that the \v{C}ech cohomology group $H^2(M(H^\infty),\mathbb Z)=0$.
 \end{abstract}

\date{}

\maketitle

\sect{Introduction}
The paper studies the topological structure of the maximal ideal space $M(H^\infty)$ of the Banach algebra $H^\infty$ of bounded holomorphic functions on the unit disc $\Di\subset\Co$ equipped with pointwise multiplication and supremum norm $\|\cdot\|_\infty$.
Recall that for a commutative unital complex Banach algebra $A$ with dual space $A^*$ the {\em maximal ideal space} $M(A)$ of $A$ is the set of nonzero homomorphisms $A\rightarrow\Co$ endowed with the {\em Gelfand topology}, the weak$^*$ topology induced by $A^*$. It is a compact Hausdorff space contained in the unit ball of $A^*$.
Let $C(M(A))$ be the algebra of
continuous complex-valued functions on $M(A)$ with supremum norm. The Gelfand transform $\hat \,: A\rightarrow C(M(A))$, defined by $\hat a(\varphi):=\varphi(a)$, is a nonincreasing-norm morphism of algebras that allows to thought of elements of $A$ as continuous functions on $M(A)$. 

In the case of $H^\infty$ evaluation at a point of $\Di$ is an element of $M(H^\infty)$, so $\Di$ is naturally embedded into $M(H^\infty)$ as an open subset. The famous Carleson corona theorem \cite{C} asserts that $\Di$ is dense in $M(H^\infty)$.

It is known that $M(H^\infty)$ is the union of two kinds of Gleason parts defined as follows.
Recall that the pseudohyperbolic metric on $\Di$ is given by
\[
\rho(z,w):=\left|\frac{z-w}{1-\bar w z}\right|,\qquad z,w\in\Di.
\]
For $x,y\in M(H^\infty)$ the formula
\[
\rho(x,y):=\sup\{|\hat f(y)|\, :\, f\in H^\infty,\, \hat f(x)=0,\, \|f\|_\infty\le 1\}
\]
extends $\rho$ to $M(H^\infty)$.
The {\em Gleason part} of $x\in M(H^\infty)$ is then determined by $\pi(x):=\{y\in M(H^\infty)\, :\, \rho(x,y)<1\}$. For $x,y\in M(H^\infty)$ we have
$\pi(x)=\pi(y)$ or $\pi(x)\cap\pi(y)=\emptyset$. Hoffman's classification of Gleason parts \cite{H} says that there are only two cases: either $\pi(x)=\{x\}$ or $\pi(x)$ is an analytic disk. The former case means that there is a continuous one-to-one and onto map $L_x:\Di\rightarrow\pi(x)$ such that
$\hat f\circ L_x\in H^\infty$ for every $f\in H^\infty$. Moreover, any analytic disk is contained in a Gleason part and any maximal (i.e., not contained in any other) analytic disk is a Gleason part. By $M_a$ and $M_s$ we denote sets of all non-trivial (analytic disks) and trivial (one-pointed) Gleason parts, respectively. It is known that $M_a\subset M(H^\infty)$ is open. Hoffman proved that $\pi(x)\subset M_a$ if and only if $x$ belongs to the closure of some $H^\infty$ interpolating sequence in $\Di$. 

More recent developments in the area are due to the work of Su\'{a}rez \cite{S1, S2} who proved the following profound results
\begin{itemize}
\item[(1)] The covering dimension of $M(H^\infty)$ is $2$;
\item[(2)] The second \v{C}ech cohomology group $H^2(M(H^\infty),\mathbb Z)=0$;
\item[(3)] The set of trivial Gleason parts $M_s$ is totally disconnected.
\end{itemize}
(Recall that for a normal space $X$, $\text{dim}\, X\le n$ if every finite open cover
of $X$ can be refined by an open cover whose order $\le n + 1$. If $\text{dim}\, X\le n$ and
the statement $\text{dim}\, X\le n-1$ is false, we say $\text{dim}\, X = n$.)

The original proof of property (1) in \cite{S1} is based on a deep result of Treil \cite{T} asserting that the Bass stable rank of $H^\infty$ is 1 along with some other powerful techniques of the theory of $H^\infty$.  An alternative proof, not using this fact but invoking property (3),  was given by the author in \cite{Br1}. Specifically, it was shown that the set of all non-trivial Gleason parts $M_a$ is homeomorphic to a fibre bundle over a compact Riemann surface $S$ of genus $g\ge 2$ with the fibre an open subset of the Stone-\v{C}ech compactification of the fundamental group of $S$. This implies that any compact subset of $M_a$ has covering dimension $\le 2$ which together with property (3) gives ${\rm dim}\, M(H^\infty)=2$ by a known topological result.

Property (2) is proved in \cite{S1} as one of important steps towards establishing property (1) in his conception. The proof relies completely on the above mentioned Treil's result \cite{T} and some constructions of this paper. In the present paper we show that property (2) is 
the consequence of the fact that $H^\infty$ is a projective free Banach algebra, see \cite[Cor.~3.30]{Q},  \cite[Th.~1.5]{BS}. The latter can be deduced from the classical Beurling-Lax-Halmos theorem (for its formulation see, e.g., \cite[p.~1025]{To} and references therein).

Property (3) is proved in \cite{S2} using some results from \cite{S1} and is based on a modification of the construction of Garnett and Nicolau \cite{GN}  who exploited it to show  that interpolating Blaschke products generate $H^\infty$. In this paper we give an alternative proof of property (3) based on the classical construction of Carleson \cite{C}.

Finally, we prove a result describing the topological nature of $M(H^\infty)$ asserting that
$M(H^\infty)$ is homeomorphic to the Freudenthal compactification $\gamma(M_a)$ (sometimes referred to as the end compactification, see \cite{F}, \cite{M}) of the set of all non-trivial Gleason parts $M_a$. Thus each trivial Gleason part is the end of $M_a$ in the sense of Freudenthal.

\sect{$M_s$ is Totally Disconnected} 
A topological space $X$ is {\em totally disconnected} if any subset of $X$ containing more than two points is disconnected. If $X$ is a compact Hausdorff space, then it is totally disconnected if and only if $\text{dim}\, X=0$ (see, e.g., \cite{N} for basic results of the dimension theory).

For a continuous function $g:X\rightarrow\Co$ we set $S_X(g;\varepsilon):=\{x\in X\, :\,  |g(x)|<\varepsilon\}$. By ${\rm cl}_X$ we denote closure in $X$.
In the next result, $\hat g\in C(M(H^\infty))$ stands for the (continuous) extension of  $g\in H^\infty$ to $M(H^\infty)$ by means of the Gelfand transform. Also, we equip the set of trivial Gleason parts $M_s\subset M(H^\infty)$ by the induced topology.

Let $f\in H^\infty\setminus\{0\}$, $\|f\|_\infty=1$, be such that $\hat f(x)=0$ for some $x\in M_s$.  Recall that for  each $\delta\in (0,1)$, the classical Carleson construction \cite{C} produces a positive $\varepsilon=\varepsilon(\delta)$ and an open set $\Omega_\varepsilon$ with the boundary $\Gamma_\varepsilon$ being a {\em Carleson contour} such that
\begin{equation}\label{eq1}
S_{\Di}(f;\varepsilon)\subset\Omega_\varepsilon\subset S_{\Di}(f;\delta/2).
\end{equation}

We have
\begin{Th}\label{te1}
 ${\rm cl}_{M(H^\infty)}(\Omega_\varepsilon)\cap M_s$ is a clopen subset of $S_{M_s}(\hat f;\delta)$ containing $x$.
 \end{Th}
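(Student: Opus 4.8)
Throughout write $\mathcal M:=M(H^\infty)$ and abbreviate $\Omega:=\Omega_\varepsilon$, $\Gamma:=\Gamma_\varepsilon$; let $V:=\Di\setminus{\rm cl}_\Di(\Omega)$ be the exterior of the contour, so that $\Di=\Omega\sqcup\Gamma\sqcup V$. The plan is to dispatch the two soft assertions first and then to reduce the word ``clopen'' to two statements separating the $\mathcal M$-closures of $\Omega,\Gamma,V$.

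First I record what \eqref{eq1} forces. Since $\hat f$ is continuous and $\Di$ is dense in $\mathcal M$ by the corona theorem, the inclusion $\Omega\subset S_\Di(f;\delta/2)$ gives ${\rm cl}_{\mathcal M}(\Omega)\subset\{\,|\hat f|\le\delta/2\,\}\subset S_{\mathcal M}(\hat f;\delta)$, whence ${\rm cl}_{\mathcal M}(\Omega)\cap M_s\subset S_{M_s}(\hat f;\delta)$. That $x\in{\rm cl}_{\mathcal M}(\Omega)$ follows because $\hat f(x)=0$ makes $S_{\mathcal M}(\hat f;\varepsilon)$ an open neighbourhood of $x$: every neighbourhood $U$ of $x$ meets the nonempty open set $U\cap S_{\mathcal M}(\hat f;\varepsilon)$ in a point of $\Di$ by density, i.e. in a point of $S_\Di(f;\varepsilon)\subset\Omega$. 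As $x\in M_s$, we get $x\in{\rm cl}_{\mathcal M}(\Omega)\cap M_s$. Finally ${\rm cl}_{\mathcal M}(\Omega)$ is closed, so ${\rm cl}_{\mathcal M}(\Omega)\cap M_s$ is closed in $M_s$, hence in the subspace $S_{M_s}(\hat f;\delta)$; this is the ``closed'' half.

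For ``open'' I would reduce everything to the two claims
\[
\text{(a)}\quad{\rm cl}_{\mathcal M}(\Gamma)\cap M_s=\emptyset,
\qquad
\text{(b)}\quad{\rm cl}_{\mathcal M}(\Omega)\cap{\rm cl}_{\mathcal M}(V)\subset{\rm cl}_{\mathcal M}(\Gamma).
\]
Granting them, fix $y\in S_{M_s}(\hat f;\delta)$; by (a) some neighbourhood $W\ni y$ misses $\Gamma$, so $W\cap\Di\subset\Omega\cup V$ and $y\in{\rm cl}_{\mathcal M}(W\cap\Di)\subset{\rm cl}_{\mathcal M}(\Omega)\cup{\rm cl}_{\mathcal M}(V)$. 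Thus the relatively closed sets $P:={\rm cl}_{\mathcal M}(\Omega)\cap S_{M_s}(\hat f;\delta)$ and $Q:={\rm cl}_{\mathcal M}(V)\cap S_{M_s}(\hat f;\delta)$ cover $S_{M_s}(\hat f;\delta)$, while (a) and (b) give $P\cap Q\subset{\rm cl}_{\mathcal M}(\Gamma)\cap M_s=\emptyset$. Hence $P$ is clopen in $S_{M_s}(\hat f;\delta)$, and by the previous paragraph $P={\rm cl}_{\mathcal M}(\Omega)\cap M_s$ contains $x$, which is exactly the assertion.

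It remains to prove (a) and (b), and this passage from the planar separation effected by $\Gamma$ to a genuine separation of closures at trivial parts is where I expect the real work to lie. For (a) I would invoke that $\Gamma$ is a \emph{Carleson} contour: by its construction in \cite{C} it lies within a bounded pseudohyperbolic distance of an $H^\infty$ interpolating sequence (or a finite union of such), so any $m\in{\rm cl}_{\mathcal M}(\Gamma)$ satisfies $\rho(m,m')<1$ for some $m'$ in the closure of that sequence; by Hoffman's criterion $m'\in M_a$, hence $m\in\pi(m')\subset M_a$ and $m\notin M_s$. For (b), the main obstacle, suppose $y\in{\rm cl}_{\mathcal M}(\Omega)\cap{\rm cl}_{\mathcal M}(V)$ with $y\notin{\rm cl}_{\mathcal M}(\Gamma)$, and choose $W\ni y$ with ${\rm cl}_{\mathcal M}(W)\cap{\rm cl}_{\mathcal M}(\Gamma)=\emptyset$. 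Exploiting again that $\Gamma$ is a Carleson contour I would manufacture, via a $\bar\partial$-correction of the indicator $\chi_\Omega$ as in the corona construction, a function $h\in H^\infty$ whose error $|h-\chi_\Omega|$ is concentrated near $\Gamma$, hence uniformly small on $W\cap\Di$. Then $\hat h$ is close to $1$ along any net in $\Omega\cap W$ converging to $y$ and close to $0$ along any net in $V\cap W$ converging to $y$, contradicting continuity of $\hat h$ at the single point $y$. Everything outside (a)--(b) is soft topology together with the corona theorem.
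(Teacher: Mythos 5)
Your overall skeleton is sound and in fact runs parallel to the paper's own proof: your claim (a) is exactly the paper's step that ${\rm cl}_{M(H^\infty)}(\Gamma_\varepsilon)\subset M_a$, proved as you propose from the interpolating sequence \eqref{eq2} inside $\Gamma_\varepsilon$, Hoffman's theorem, and the Hoffman-type lemma of \cite[Ch.~X]{Ga}; and your claim (b) is precisely what the paper extracts from Su\'arez's extension theorem \cite[Th.~3.2]{S1}: the indicator $\chi_{\Omega_\varepsilon}$ is a bounded holomorphic function on $U\cap\Di$, where $U=M(H^\infty)\setminus {\rm cl}_{M(H^\infty)}(\Gamma_\varepsilon)$, hence extends continuously to $U$, which forces ${\rm cl}_{M(H^\infty)}(\Omega_\varepsilon)$ and ${\rm cl}_{M(H^\infty)}(V)$ to be disjoint inside $U$. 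Your soft reductions (the covering of $S_{M_s}(\hat f;\delta)$ by $P$ and $Q$, the closedness, and $x\in{\rm cl}_{M(H^\infty)}(\Omega_\varepsilon)$) are all correct.

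The genuine gap is your proof of (b). The ``$\bar\partial$-correction of $\chi_{\Omega}$ as in the corona construction'' does not deliver what you claim. In the classical construction one takes a smooth $\varphi$ equal to $\chi_\Omega$ off a collar of $\Gamma$, checks that $|\bar\partial\varphi|\,dxdy$ is a Carleson measure with constant $C(\delta)$, and solves $\bar\partial b=\bar\partial\varphi$ with $\|b\|_\infty\le C'(\delta)$. The resulting $h=\varphi-b\in H^\infty$ satisfies only $|h-\chi_\Omega|\le C'(\delta)$, and $C'(\delta)$ is large (in particular $\ge 1$); nothing in Carleson's $\bar\partial$-theorem makes the single function $b$ small at points pseudohyperbolically far from the support of the data, and attempts to shrink the collar fail because $|\bar\partial\varphi|$ blows up proportionally, so the Carleson norm does not tend to $0$. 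This ``smallness away from $\Gamma$'' is exactly the hard content of \cite[Th.~3.2]{S1} (alternatively of Bishop's theorem \cite[Th.~1.1]{B}), whose proofs require several pages of Carleson-estimate machinery; as written, your step (b) assumes the crux rather than proving it. There is also a secondary unproved point: you use that $W\cap\Di$ is ``far from $\Gamma$,'' but ${\rm cl}_{M(H^\infty)}(W)\cap{\rm cl}_{M(H^\infty)}(\Gamma)=\emptyset$ yields only a positive pseudohyperbolic gap between $W\cap\Di$ and $\Gamma$, and even that requires another application of the Hoffman lemma plus a compactness argument (the same tool as in (a)); no Euclidean or uniform separation, which is what a kernel estimate would need, follows. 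To close the gap, replace your construction of $h$ by a citation of \cite[Th.~3.2]{S1} applied to $\chi_{\Omega_\varepsilon}\in H^\infty(U\cap\Di)$ (or by \cite[Th.~1.1]{B}); with that substitution your argument becomes a correct, slightly unwound version of the paper's proof.
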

 \begin{proof}
 It is well known (see, e.g., \cite[Ch.~VIII, Sect.\,4]{Ga}) that there is a $H^\infty$ interpolating sequence $\{z_n\}\subset\Gamma_{\varepsilon}$ and a number $c\in (0,1)$ such that
\begin{equation}\label{eq2}
\inf_j\rho(z_j,z)< c\quad {\rm for\ all}\quad z\in\Gamma_{\varepsilon}.
\end{equation}
Due to the result of Hoffman \cite{H}, ${\rm cl}_{M(H^\infty)}(\{z_n\})\subset M_a$. Then \eqref{eq2} implies that ${\rm cl}_{M(H^\infty)}(\Gamma_{\varepsilon})\subset M_a$ as well (see, e.g., \cite[Ch.~X]{Ga} ).

Next, consider the open set $U:=M(H^\infty)\setminus {\rm cl}_{M(H^\infty)}(\Gamma_{\varepsilon})$. By definition 
\[
U\cap\Di=\Omega_{\varepsilon}\sqcup \bigl(\Di\setminus {\rm cl}_{\Di}(\Omega_{\varepsilon})\bigr).
\]
Let $g: U\cap\Di\rightarrow\{0,1\}$ be the indicator function of $\Omega_{\varepsilon}$. Clearly, $g\in H^\infty(U\cap\Di)$. Hence, \cite[Th.~3.2]{S1} implies that $g$ admits a continuous extension $\tilde g\in C(U)$. Observe that
\[
U\cap M_s=\bigl(M(H^\infty)\setminus {\rm cl}_{M(H^\infty)}(\Gamma_{\varepsilon})\bigr)\cap M_s=M_s.
\]
So, $\tilde g|_{M_s}\in C(M_s)$ attains values $0$ and $1$ only. In particular, ${\rm cl}_{M(H^\infty)}(\Omega_{\varepsilon})\cap M_s=(\tilde g|_{M_s})^{-1}(1)$ is a clopen subset of $M_s$. Due to \eqref{eq1},
\[
{\rm cl}_{M(H^\infty)}(\Omega_{\varepsilon})\cap M_s\subset {\rm cl}_{M(H^\infty)}(S_{\Di}(f;\delta/2))\cap M_s\subset S_{M_s}(\hat f;\delta).
\]
Finally, since $x$ is a limit point of 
$S_{\Di}(f;\varepsilon)$, it belongs to ${\rm cl}_{M(H^\infty)}(\Omega_{\varepsilon})$ as well.
\end{proof}
 \begin{C}\label{cor1}
$M_s$ is totally disconnected. 
\end{C}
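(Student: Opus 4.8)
The plan is to derive total disconnectedness of $M_s$ directly from Theorem \ref{te1} by separating any two distinct points of $M_s$ with a set that is clopen in $M_s$. The reduction is this: suppose for every pair of distinct points $x,y\in M_s$ we can produce a set $C$, clopen in $M_s$, with $x\in C$ and $y\notin C$. If $A\subset M_s$ contains two distinct points $x,y$, then $A\cap C$ and $A\setminus C$ are disjoint, nonempty, relatively open in $A$, and cover $A$, so they form a separation of $A$; hence $A$ is disconnected. By the definition recalled at the start of this section, this is precisely total disconnectedness of $M_s$. (Equivalently one may phrase the output as a base of clopen sets, giving ${\rm dim}\,M_s=0$; since $M_a$ is open, $M_s$ is closed in $M(H^\infty)$, hence a compact Hausdorff subspace, and the two formulations coincide.)

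So I would fix distinct $x,y\in M_s$ and first manufacture a suitable test function. Since distinct homomorphisms disagree on some element, choose $g\in H^\infty$ with $\hat g(x)\ne\hat g(y)$ and set $f:=(g-\hat g(x)\cdot 1)/\|g-\hat g(x)\cdot 1\|_\infty$. The denominator is nonzero because $g$ is not the constant $\hat g(x)$, so $f$ is well defined, $\|f\|_\infty=1$, $\hat f(x)=0$, and $\hat f(y)\ne 0$. I would then pick $\delta:=\min\{|\hat f(y)|,\tfrac12\}\in(0,1)$, chosen exactly so that $|\hat f(y)|\ge\delta$, whence $y\notin S_{M_s}(\hat f;\delta)$, while $\delta$ stays in the range $(0,1)$ required to run the Carleson construction.

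With $f$ and $\delta$ fixed, I would apply Theorem \ref{te1} to obtain $\varepsilon=\varepsilon(\delta)$ and an open set $\Omega_\varepsilon$ for which $C:={\rm cl}_{M(H^\infty)}(\Omega_\varepsilon)\cap M_s$ contains $x$ and is clopen in $S_{M_s}(\hat f;\delta)$. I would then upgrade this to clopenness in all of $M_s$: the set $C$ is closed in $M_s$ because it is the trace on $M_s$ of the closed set ${\rm cl}_{M(H^\infty)}(\Omega_\varepsilon)$, and it is open in $M_s$ because it is open in $S_{M_s}(\hat f;\delta)$, which is itself open in $M_s$ (as $\hat f$ is continuous). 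Finally, from $C\subset S_{M_s}(\hat f;\delta)$ and $y\notin S_{M_s}(\hat f;\delta)$ we get $y\notin C$. Thus $C$ is clopen in $M_s$, contains $x$, and omits $y$, which is exactly what the reduction of the first paragraph requires.

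The genuine content has already been placed in Theorem \ref{te1}, so I do not expect a real obstacle here; what remains is bookkeeping. The two points needing care are the normalization of $g$ into a unit-norm $f$ vanishing at $x$, and the selection of $\delta\in(0,1)$ small enough to exclude $y$ yet admissible for the Carleson construction. The one subtlety worth flagging explicitly is the passage from ``clopen in $S_{M_s}(\hat f;\delta)$'' to ``clopen in $M_s$'', which is why I would record the closed-in-$M_s$ and open-in-$M_s$ observations separately rather than leaving them implicit.
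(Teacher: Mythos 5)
Your proposal is correct and takes essentially the same route as the paper: both proofs consist of applying Theorem \ref{te1} to a suitably normalized unit-norm function vanishing at a given point of $M_s$, plus routine topology. The only cosmetic difference is that the paper extracts a clopen neighbourhood inside each basic Gelfand-topology neighbourhood $\bigcap_{i=1}^n S_{M_s}(\hat f_i;\delta_i)$, concluding that $M_s$ has a base of clopen sets, whereas you separate a fixed pair of points $x,y\in M_s$ by a single clopen set; for the compact Hausdorff space $M_s$ these formulations are equivalent and both yield total disconnectedness.
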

\begin{proof}
By the definition of the Gelfand topology, any open neighbourhood of $x\in M_s$ in $M_s$  contains an open neighbourhood  of the form
\[
\bigcap_{i=1}^n \{S_{M_s}(\hat f_i;\delta_i)\, :\,  \hat f_i(x)=0,\ \|f_i\|_\infty=1,\
\delta_i\in (0,1)\},  \quad n\in\mathbb N.
\]
In turn, each of the latter sets contains a clopen neighbourhood of $x$ by Theorem \ref{te1}. Thus, $M_s$ has the base of topology consisting of clopen sets, i.e., $M_s$ is totally disconnected.
\end{proof}

\begin{R}
{\rm In our arguments, we used the theorem of Su\'arez \cite[Th.~3.2]{S1} whose proof relies on the Carleson estimates \cite[Ch.~VIII,\,Th.~5.1]{Ga} and the fact that algebra $H^\infty$ is separating.  (In fact, the latter is not required as one can argue as in the proof of \cite[Th.~1.7]{Br2}.) Alternatively, here one can use Bishop's theorem \cite[Th.~1.1]{B}.}  
\end{R}

\sect{$H^2(M(H^\infty),\mathbb Z)=0$}
A commutative unital complex Banach algebra $A$ is said to
be {\em projective free} if every finitely generated projective
$A$-module is free.  The Novodvorski-Taylor theory asserts that the Gelfand transform $\ \hat{} : A\rightarrow C(M(A))$ determines an isomorphism between categories $P(A)$ of isomorphism classes of finitely generated projective $A$-modules and $Vect_{\Co}(M(A))$ of isomorphism classes of finite rank complex vector bundles on the maximal ideal space $M(A)$ of $A$, see \cite{No}, \cite[Th.~6.8,\, p.\,199]{Ta}. This results in the following statement:
\begin{Prop}\label{te2}
The following conditions are equivalent:
\begin{itemize}
\item[(1)]
$A$ is a projective free algebra:
\item[(2)]
$C(M(A))$ is a projective free algebra:
\item[(3)]
$M(A)$ is connected and each finite rank complex vector bundle on $M(A)$ is topologically trivial.
\end{itemize}
\end{Prop}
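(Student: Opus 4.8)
The plan is to prove the equivalence of the three conditions in Proposition~\ref{te2} by establishing the cycle $(1)\Leftrightarrow(2)$ and $(2)\Leftrightarrow(3)$, leaning on the Novodvorski--Taylor categorical equivalence as the central tool. The key observation is that finitely generated projective modules over a commutative unital Banach algebra $A$ correspond, via the Gelfand transform, precisely to finite rank complex vector bundles over $M(A)$, and that a projective module is free if and only if the corresponding bundle is trivial. This is because a free $A$-module of rank $n$ corresponds to the trivial bundle $M(A)\times\Co^n$, and the equivalence of categories preserves this correspondence.

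First I would prove $(1)\Leftrightarrow(2)$. Both $A$ and $C(M(A))$ have the same maximal ideal space $M(A)$: indeed, $M(C(M(A)))$ is homeomorphic to $M(A)$ by the standard fact that for a compact Hausdorff space $X$, the maximal ideal space of $C(X)$ is $X$ itself. Consequently the categories $P(A)$ and $P(C(M(A)))$ are both equivalent to $Vect_{\Co}(M(A))$ through their respective Gelfand transforms. Under these equivalences the free modules on each side correspond to the trivial bundles on $M(A)$. Hence a finitely generated projective $A$-module is free if and only if its associated bundle is trivial, if and only if the corresponding projective $C(M(A))$-module is free. This gives the equivalence of $(1)$ and $(2)$ directly.

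Next I would prove $(2)\Leftrightarrow(3)$, which is essentially a translation of projective freeness of $C(X)$ into the language of bundles together with a connectedness subtlety. The Swan-type correspondence for $C(X)$ (a special case of Novodvorski--Taylor, or classically Swan's theorem) identifies finitely generated projective $C(M(A))$-modules with finite rank complex vector bundles on $M(A)$, and freeness with topological triviality. The one point requiring care is the role of connectedness: over a disconnected space the module of constant-rank-$0$ sections on one component can produce idempotents splitting $C(X)$ into a product, so a free module of rank $n$ forces $M(A)$ to be connected (a trivial bundle has globally constant rank, and this is compatible with projective freeness only when the rank function cannot jump between components); conversely, if $M(A)$ is connected and all finite rank bundles are trivial, every finitely generated projective module is free. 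I would spell out that when $M(A)$ is disconnected, the bundle of rank $1$ on one clopen piece and rank $0$ on the complement is projective but not free, so connectedness is genuinely needed in $(3)$.

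The main obstacle I anticipate is handling the rank and connectedness bookkeeping cleanly in the passage $(2)\Leftrightarrow(3)$, since triviality of bundles alone does not capture projective freeness without the connectedness hypothesis, and one must verify that the trivial bundle correspondence matches free modules on the nose across components. The rest is a formal consequence of the stated Novodvorski--Taylor equivalence, so once the definitions are aligned the argument is short; I would invoke \cite{No} and \cite[Th.~6.8,\,p.\,199]{Ta} for the equivalence of categories and supply only the bundle-theoretic triviality translation and the connectedness remark as the genuinely new content of the proof.
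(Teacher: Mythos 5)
Your proposal is correct and takes essentially the same route as the paper: the paper states Proposition \ref{te2} as a direct consequence of the Novodvorski--Taylor equivalence $P(A)\cong Vect_{\Co}(M(A))$ (see \cite{No}, \cite{Ta}), with free modules corresponding to trivial bundles, which is exactly the argument you elaborate. Your explicit treatment of the connectedness clause (via $M(C(X))\cong X$ and the rank-jumping projective module over a disconnected space) is a useful spelling-out of the bookkeeping the paper leaves implicit, but it is not a different method.
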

Since isomorphism classes of rank one complex vector bundles on $M(A)$ are in the one-to-one correspondence (determined by assigning to each bundle its first Chern class) with elements of the \v{C}ech cohomology group $H^2(M(A),\mathbb Z)$ (see, e.g., \cite{Hus}), projective freeness of $A$ implies that $H^2(M(A),\mathbb Z)=0$. It is known that $H^\infty$ is projective free \cite[Cor.~3.30]{Q}, \cite[Th.~1.5]{BS}. Thus, we get
\begin{C}\label{cor2}
$H^2(M(H^\infty),\mathbb Z)=0$.
\end{C}
\begin{R}
{\rm (1) In fact, from the projective freeness of $A$ follows also that even rational \v{C}ech cohomology groups $H^{2m}(M(A),\mathbb Q)=0$ for all $m\ge 2$. This is the consequence of some fundamental result of $K$-theory, see e.g., \cite{K}.\\
(2)  In \cite[Th.~1.5]{BS} projective freeness is established for algebras $H^\infty(U)$ for a large class of Riemann surfaces $U$. In this case, as in Corollary \ref{cor2}, we obtain
$H^2(M(H^\infty(U)),\mathbb Z)=0$. For instance, as such $U$ one can take an unbranched covering of an open bordered Riemann surface. The proof in \cite[Th.~1.5]{BS} is based on an analog of the Beurling-Lax-Halmos theorem established by the author in an earlier paper. For the sake of completeness, we place its version for $H^\infty$ in the Appendix.
 }
\end{R}
\sect{$M(H^\infty)$ is the Freudenthal compactification of $M_a$} 
Let $X$ be a semicompact Hausdorff space (i.e. every point of $X$ has arbitrarily small neighbourhoods with compact boundaries). The {\em Freudenthal compactification} $\gamma(X)$ of $X$ is the unique (up to homeomorphism) Hausdorff compactification of $X$\footnote{i.e., a compact Hausdorff space containing $X$ as an open dense subset} having the following properties
\begin{itemize}
\item[(a)]
$\gamma(X)\setminus X$ is zero-dimensionally embedded in $\gamma(X)$, i.e., any point in $\gamma(X)\setminus X$ has arbitrarily small neighbourhoods whose boundaries lie in $X$;
\item[(b)]
$\gamma(X)$ is maximal with respect to (a). That is, if  $c(X)$ is a Hausdorff compactification of $X$ such that $c(X)\setminus X$ is zero-dimensionally embedded in 
$c (X)$, then the identity map on $X$ has a continuous extension from $\gamma(X)$ to $c(X)$.
\end{itemize}

Let us mention some properties of $\gamma(X)$ (see also \cite{F,M,I,D,DM}):\smallskip

-- Any two disjoint closed subsets of $X$ with compact boundaries have disjoint closures in $\gamma(X)$;\smallskip

-- $\gamma(X)$ is a perfect compactification, i.e., for each $x\in\gamma(X)$ and each open neighbourhood $U$ of $x$ in $\gamma (X)$ set $U\cap X$ is not disjoint union of two open sets $V$ and $W$ such that $x\in {\rm cl}_{\gamma(X)}(U)\cap {\rm cl}_{\gamma(X)}(W)$. In fact, $\gamma(X)$ is the unique perfect compactification of $X$ in which $\gamma (X) \setminus X$ zero-dimensionally embeds;\smallskip

-- If $X$ is connected and locally connected, then so is $\gamma(X)$;\smallskip

-- Any homeomorphism between any two semicompact Hausdorff spaces extends to a homeomorphism between their Freudenthal compactifications.\medskip

Also, the Freudenthal compactification $\gamma(X)$ can be determined as follows: 

Let $\bar{C}_{\rm fin}(X)$  be closure in $C_b(X)$ (-\,the Banach algebra of bounded complex-valued continuous functions on $X$) of the algebra $C_{\rm fin}(X)$ of all functions $f\in C_b(X)$ for which there is a compact subset $K\subset X$ such that $f(X\setminus K)\subset\Co$ is finite. Then the maximal ideal space $M(\bar C_{\rm fin}(X))$ of $\bar C_{\rm fin}(X)$ is homeomorphic to $\gamma(X)$.

The main result of this section is 
\begin{Th}\label{te3}
$M(H^\infty)$ is homeomorphic to $\gamma(M_a)$.
\end{Th}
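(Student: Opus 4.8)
The plan is to identify $M(H^\infty)$ as a Hausdorff compactification of $M_a$ satisfying the two defining properties (a) and (b) of the Freudenthal compactification, and then invoke uniqueness. Recall from the excerpt that $M_a\subset M(H^\infty)$ is open, that $\Di$ is dense in $M(H^\infty)$ by the corona theorem, and that $M_s=M(H^\infty)\setminus M_a$ is totally disconnected (Corollary \ref{cor1}). Since $\Di\subset M_a$ and $\Di$ is dense in $M(H^\infty)$, the set $M_a$ is dense in $M(H^\infty)$; combined with openness this already exhibits $M(H^\infty)$ as a Hausdorff compactification of $M_a$. It remains to verify that $M_a$ is semicompact so that $\gamma(M_a)$ is defined, and that $M(H^\infty)$ has the characterizing properties of $\gamma(M_a)$.

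**First I would** establish property (a): that $M_s=M(H^\infty)\setminus M_a$ is zero-dimensionally embedded in $M(H^\infty)$, meaning every point $x\in M_s$ has arbitrarily small neighbourhoods whose topological boundaries lie entirely in $M_a$. This is exactly where Theorem \ref{te1} does the work. Given $x\in M_s$, pick $f\in H^\infty$ with $\|f\|_\infty=1$ and $\hat f(x)=0$, and $\delta\in(0,1)$; the Carleson construction yields $\varepsilon=\varepsilon(\delta)$ and the contour $\Gamma_\varepsilon$ with ${\rm cl}_{M(H^\infty)}(\Gamma_\varepsilon)\subset M_a$ (shown inside the proof of Theorem \ref{te1}). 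The set $V:={\rm cl}_{M(H^\infty)}(\Omega_\varepsilon)\setminus {\rm cl}_{M(H^\infty)}(\Gamma_\varepsilon)$ should serve as an open neighbourhood of $x$ whose boundary is contained in ${\rm cl}_{M(H^\infty)}(\Gamma_\varepsilon)\subset M_a$. Intersecting several such sets over $f_1,\dots,f_n$ and shrinking $\delta_i$ gives, via the Gelfand-topology base used in Corollary \ref{cor1}, arbitrarily small neighbourhoods of $x$ with boundaries in $M_a$. I expect this to also yield semicompactness of $M_a$ as a byproduct: a point of $M_a$ has a compact neighbourhood since $M_a$ is open in the compact space $M(H^\infty)$, while the neighbourhood bases just constructed furnish compact boundaries near $M_s$.

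**The main obstacle will be** property (b), the maximality: showing that any Hausdorff compactification $c(M_a)$ in which $c(M_a)\setminus M_a$ is zero-dimensionally embedded admits a continuous extension of the identity from $M(H^\infty)$. Rather than verify (b) directly, I would prove that $M(H^\infty)$ is a \emph{perfect} compactification of $M_a$ in which the remainder $M_s$ zero-dimensionally embeds, and invoke the uniqueness statement quoted in the excerpt ("$\gamma(X)$ is the unique perfect compactification of $X$ in which $\gamma(X)\setminus X$ zero-dimensionally embeds"). Perfectness requires that for each $x\in M(H^\infty)$ and each neighbourhood $U$, the set $U\cap M_a$ cannot be split as $V\sqcup W$ with $x\in {\rm cl}(V)\cap {\rm cl}(W)$. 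For $x\in M_a$ this is immediate since $M_a$ is open. For $x\in M_s$ the content is that $M_a$ does not "bifurcate" at $x$: I would argue that near $x$ the analytic-disk structure forces connectivity of the trace of $M_a$, likely again through the Carleson contour, since the single interpolating sequence $\{z_n\}\subset\Gamma_\varepsilon$ and the separating property of $H^\infty$ prevent such a clopen splitting of $U\cap M_a$ that has $x$ in both closures. Verifying this non-bifurcation rigorously is the delicate point and is where the $H^\infty$-specific geometry, as opposed to soft topology, must be deployed.

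**Finally I would** assemble the pieces: semicompactness of $M_a$ (so $\gamma(M_a)$ exists), the identification of $M(H^\infty)$ as a compactification of $M_a$, property (a) via Theorem \ref{te1}, and perfectness via the non-bifurcation argument; uniqueness of the perfect compactification with zero-dimensionally embedded remainder then gives the homeomorphism $M(H^\infty)\cong\gamma(M_a)$ fixing $M_a$ pointwise.
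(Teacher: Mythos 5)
Your overall scheme---exhibit $M(H^\infty)$ as a Hausdorff compactification of $M_a$ (openness plus density via $\Di\subset M_a$), verify that the remainder $M_s$ is zero-dimensionally embedded, prove perfectness, and invoke the uniqueness of the perfect compactification with zero-dimensionally embedded remainder---is a legitimate route, and it differs from the paper's proof, which instead extends every function of $C_{\rm fin}(M_a)$ to $M(H^\infty)$, applies Stone--Weierstrass to conclude $C(M(H^\infty))\cong \bar C_{\rm fin}(M_a)$, and then uses the function-algebra characterization $\gamma(M_a)\cong M(\bar C_{\rm fin}(M_a))$. Your treatment of property (a) is essentially sound, with one caveat: the openness in $M(H^\infty)$ of $V:={\rm cl}_{M(H^\infty)}(\Omega_\varepsilon)\setminus{\rm cl}_{M(H^\infty)}(\Gamma_\varepsilon)$ is not part of the statement of Theorem \ref{te1} (which asserts clopenness only inside $M_s$); you must re-run the argument inside its proof: on $U=M(H^\infty)\setminus{\rm cl}_{M(H^\infty)}(\Gamma_\varepsilon)$ the Su\'arez extension $\tilde g$ of the indicator of $\Omega_\varepsilon$ takes only the values $0,1$ (by density of $\Di$), and $V=\tilde g^{-1}(1)$ is clopen in $U$, hence open in $M(H^\infty)$, with boundary inside ${\rm cl}_{M(H^\infty)}(\Gamma_\varepsilon)\subset M_a$.

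The genuine gap is perfectness, which you explicitly leave unproved; moreover, the mechanism you gesture at (the interpolating sequence on $\Gamma_\varepsilon$ and the separating property of $H^\infty$ ``preventing bifurcation'') is not the right tool and does not by itself rule out a clopen splitting of $U\cap M_a$. What closes the gap is the same engine that drives Theorem \ref{te1}: Su\'arez's extension theorem \cite[Th.~3.2]{S1} applied to indicator functions. Suppose $U$ is an open neighbourhood of $x\in M_s$ and $U\cap M_a=V\sqcup W$ with $V,W$ open and $x\in{\rm cl}_{M(H^\infty)}(V)\cap{\rm cl}_{M(H^\infty)}(W)$. Each connected component of the open set $U\cap\Di$ lies entirely in $V$ or in $W$, so the indicator $g$ of $V\cap\Di$ is locally constant on $U\cap\Di$, hence $g\in H^\infty(U\cap\Di)$, and it extends to $\tilde g\in C(U)$. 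Since $\Di$ is dense in $M(H^\infty)$, $\tilde g$ equals $1$ on $V$ and $0$ on $W$, hence $1$ on ${\rm cl}_{M(H^\infty)}(V)\cap U$ and $0$ on ${\rm cl}_{M(H^\infty)}(W)\cap U$, contradicting the choice of $x$. With this inserted your argument is complete; note that it then rests on exactly the same analytic fact as the paper's proof (continuous extendability of locally constant bounded holomorphic functions on $U\cap\Di$), the difference being only in the packaging: you route it through Dickman-type topological characterizations of $\gamma$, while the paper routes it through $\bar C_{\rm fin}(M_a)$ and Stone--Weierstrass.
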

\begin{R} 
{\rm (1) Note that $M_a$ is locally compact and, hence, semicompact. In fact, the base of topology of $M_a$ consists of sets of the form $S_{M_a}(\hat B;\varepsilon):=\{x\in M_a\, :\, |\hat B(x)|<\varepsilon\}$, where $B$ is an interpolating Blaschke product, which for all sufficiently small $\varepsilon$ are relatively compact subsets of $M_a$ (see, e.g., \cite[Sect.~2.2]{Br2}). Therefore $\gamma(M_a)$ is well defined.

\noindent (2) Theorem \ref{te3} implies that $C(M(H^\infty))$ is isometrically isomorphic to $\bar{C}_{\rm fin}(M_a)$ (cf. Bishop \cite[Th.~1.1]{B}).
}
\end{R}
\begin{proof}
\begin{Lm}
Each function in $C_{\rm fin}(M_a)$ can be continuously extended to a function in $C(M(H^\infty))$ and the set of all such extensions separates points of $M(H^\infty)$. 
\end{Lm}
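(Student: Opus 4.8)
The plan is to prove the two assertions of the lemma separately: first that every $\varphi\in C_{\rm fin}(M_a)$ extends continuously to $M(H^\infty)$, and then that the totality of such extensions separates points. The extension part rests on the observation that $\varphi$ is locally constant near $M_s$, so that the extension theorem \cite[Th.~3.2]{S1} (used already in the proof of Theorem \ref{te1}) applies to indicator functions; the separation part splits into easy cases handled by $C_c(M_a)$ and the substantive case $x,y\in M_s$, which is reduced to Theorem \ref{te1}.

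For the extension, fix a compact $K\subset M_a$ with $\varphi(M_a\setminus K)$ finite, say equal to $\{c_1,\dots,c_m\}$, and set $E_i:=\varphi^{-1}(c_i)\cap(M_a\setminus K)$. Since the $c_i$ are isolated in the finite image, each $E_i$ is clopen in $M_a\setminus K$, hence open in $M(H^\infty)$ (as $M_a\setminus K$ is open). The key observation is that the boundary in $\Di$ of $D_i:=E_i\cap\Di$ lies in $K$: such a boundary point belongs to ${\rm cl}_{M(H^\infty)}(E_i)\setminus E_i\subset K\cup M_s$ and also to $\Di\subset M_a$, hence to $K$. Therefore, on the open set $V:=M(H^\infty)\setminus K$ (which contains all of $M_s$), the indicator $g_i:=\chi_{D_i}$ is a bounded holomorphic function on $V\cap\Di=\Di\setminus K$, and \cite[Th.~3.2]{S1} provides a continuous extension $\tilde g_i\in C(V)$. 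Since $\Di\setminus K$ is dense in $V$ and $g_i$ takes only the values $0,1$ there, one gets $\tilde g_i(V)\subset\{0,1\}$ and $\sum_i\tilde g_i\equiv 1$ on $V$; thus the clopen sets $\{\tilde g_i=1\}\cap M_s={\rm cl}_{M(H^\infty)}(E_i)\cap M_s$ partition $M_s$. Setting $\tilde\varphi:=\sum_i c_i\tilde g_i\in C(V)$, one checks that $\tilde\varphi$ agrees with $\varphi$ on the dense subset $\Di\setminus K$ of $M_a\setminus K=V\cap M_a$, hence on all of $V\cap M_a$; gluing $\varphi$ on the open set $M_a$ with $\tilde\varphi$ on the open set $V$ (they agree on the overlap) produces the desired extension in $C(M(H^\infty))$.

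For the separation, the cases with at least one point in $M_a$ are immediate: $M_a$ is locally compact Hausdorff, so Urysohn's lemma furnishes functions in $C_c(M_a)\subset C_{\rm fin}(M_a)$ separating any two points of $M_a$ and separating any point of $M_a$ from $M_s$ (their extensions vanish on $M_s$). The substantive case is $x,y\in M_s$. Here I would start from Theorem \ref{te1}: choosing $f\in H^\infty$ with $\hat f(x)=0\neq\hat f(y)$, $\|f\|_\infty=1$, and $\delta\in(0,|\hat f(y)|)$, the Carleson region $\Omega_\varepsilon$ yields a set ${\rm cl}_{M(H^\infty)}(\Omega_\varepsilon)\cap M_s$ that is clopen in $M_s$, contains $x$, and misses $y$. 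With $K:={\rm cl}_{M(H^\infty)}(\Gamma_\varepsilon)$ (compact in $M_a$ by Theorem \ref{te1}) and $\tilde g=\chi_W$ the corresponding $\{0,1\}$-valued extension on $V=M(H^\infty)\setminus K$, the sets $E_1:=W\cap M_a$ and $E_0:=(M_a\setminus K)\setminus W$ are clopen in $M_a\setminus K$, with $x\in{\rm cl}_{M(H^\infty)}(E_1)$ and $y\in{\rm cl}_{M(H^\infty)}(E_0)$.

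The main obstacle is precisely that the indicator $\chi_{E_1}$ is \emph{not} continuous on $M_a$ --- it jumps across $K={\rm cl}_{M(H^\infty)}(\Gamma_\varepsilon)$ --- so it is not a legitimate element of $C_{\rm fin}(M_a)$, and the naive "restrict the contour indicator to $M_a$" fails. To overcome this I would interpolate across $K$. Choose an open $O\supset K$ with ${\rm cl}_{M(H^\infty)}(O)$ compact and contained in $M_a$; on the closed set $M_a\setminus O$ keep $h:=\chi_{E_1}$ (continuous there, since $E_0,E_1$ are clopen away from $K$), and on the compact set ${\rm cl}_{M(H^\infty)}(O)$ use Urysohn's lemma to pick a continuous $h\in[0,1]$ equal to $1$ on $\partial O\cap E_1$ and $0$ on $\partial O\cap E_0$. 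Pasting these continuous functions along $\partial O$ gives $h\in C(M_a)$ with $h\in\{0,1\}$ outside the compact set ${\rm cl}_{M(H^\infty)}(O)$, so $h\in C_{\rm fin}(M_a)$. Since deleting a compact subset of $M_a$ does not alter closures on $M_s$, the pieces at infinity still satisfy $x\in{\rm cl}_{M(H^\infty)}(E_1)\cap M_s$ and $y\in{\rm cl}_{M(H^\infty)}(E_0)\cap M_s$, whence the extension $H$ of $h$ obtained in the first part satisfies $H(x)=1\neq 0=H(y)$. This completes the separation and hence the lemma.
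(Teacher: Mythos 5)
Your proof is correct, and while its first half matches the paper, its second half takes a genuinely different route. For the extension, your argument is the paper's argument written out summand by summand: the paper also passes to $U=M(H^\infty)\setminus K$, notes that the restriction of the given function to $U\cap\Di$ is locally constant, hence lies in $H^\infty(U\cap\Di)$, applies \cite[Th.~3.2]{S1}, and glues along $M_a\setminus K$ using density of $\Di\setminus K$; your decomposition into indicators $\chi_{D_i}$ is the same thing. For the separation in the essential case $x,y\in M_s$, however, the two proofs diverge. The paper works ``from outside'': it invokes Corollary \ref{cor1} (i.e.\ ${\rm dim}\,M_s=0$) to produce open sets $U_x,U_y\subset M(H^\infty)$ with disjoint closures whose traces cover $M_s$ (citing \cite{Br2} for the construction), applies the Tietze--Urysohn theorem on the compact space $M(H^\infty)$ to get $g_e$ equal to $0$ on ${\rm cl}(U_x)$ and $1$ on ${\rm cl}(U_y)$, and then simply observes that $g_e|_{M_a}\in C_{\rm fin}(M_a)$ with extension $g_e$ itself --- the separating function and its extension appear in one stroke. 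You instead work ``from inside'': you return to Theorem \ref{te1} directly, use the Carleson contour to get the clopen splitting of $M_s$ together with the explicit compact set ${\rm cl}_{M(H^\infty)}(\Gamma_\varepsilon)\subset M_a$, repair the discontinuity of $\chi_{E_1}$ across that set by a Urysohn interpolation on a compact collar ${\rm cl}(O)$, and then feed the resulting element of $C_{\rm fin}(M_a)$ back into the first half of the lemma; the observation that deleting a compact subset of $M_a$ does not change closures on $M_s$ is exactly what makes the final evaluation $H(x)=1\neq 0=H(y)$ go through, and it is sound. Your version is more self-contained (it needs only the objects of Theorem \ref{te1}, local compactness of $M_a$, and Urysohn on a compact set, rather than the neighbourhood construction cited from \cite{Br2}), at the cost of the collar-patching step; the paper's version is shorter because Tietze on the ambient compact space hands over the extension for free. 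Your handling of the remaining cases via $C_c(M_a)$ is a correct elaboration of what the paper dismisses as obvious.
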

\begin{proof}
Let $f\in C_{\rm fin}(M_a)$ and $K\subset M_a$ be compact such that $f(M_a\setminus K)\subset\Co$ is finite.
We set $U=M(H^\infty)\setminus K$. Then $U$ is an open neighbourhood of $M_s$ and $U\cap\Di=(M_a\setminus K)\cap\Di$ is disjoint union of connected open sets. So, $f|_{U\cap\Di}$ is a bounded continuous function with finite range. In particular,
it is constant on each connected component of $U\cap\Di$, i.e., $f|_{U\cap\Di}\in H^\infty(U\cap\Di)$. Hence, applying \cite[Th.~3.2]{S1} we extend $f|_{U\cap\Di}$ continuously to a bounded function $f'\in C(U)$. Since $U\cap\Di$ is dense in $M_a\setminus K$, 
\[
f'(x)=f(x)\quad {\rm for\ all}\quad x\in M_a\setminus K.
\]
Function $\tilde f\in C(M(H^\infty)$ equals $f$ on $M_a$ and $f'$ on $M_s$ is the required extension of $f$.

Further,  for distinct points $x,y\in M_s$ due to the fact that ${\rm dim}M_s=0$ we can find open neighbourhoods $U_x$ and $U_y$ of $x$ and $y$ in $M(H^\infty)$ such that
\[
\begin{array}{l}
{\rm cl}(U_x)\cap {\rm cl} (U_y)=\emptyset,\quad {\rm cl}(U_x)\cap M_s=U_x\cap M_s,\quad  {\rm cl}(U_y)\cap M_s=U_y\cap M_s\quad {\rm and}\medskip\\
M_s=(U_x\cap M_s)\cup (U_y\cap M_s)
\end{array}
\]
(cf. \cite[Lm.~4.1]{Br2} for similar arguments).

Consider a function $g\in C\bigl({\rm cl}(U_x)\cup{\rm cl}(U_y)\bigr)$ equals $0$ on 
${\rm cl}(U_x)$ and $1$ on ${\rm cl}(U_y)$. Let $g_e\in C(M(H^\infty))$ be a continuous extension of $g$ (existing by the Tietze-Urysohn theorem). Then $g_e$ attains values $0$ and $1$ outside compact set
$K:=M(H^\infty)\setminus (U_x\cup U_y)\subset M_a$. By definition $f:=g_e|_{M_a}\in C_{\rm fin}(M_a)$ and its extension $\tilde f=g_e$ separates points $x$ and $y$, as required. 

The fact that extensions $\tilde f$ as above separate points $x\in M_s$ and $y\in M_a$ or distinct points $x,y\in M_a$ is obvious.
\end{proof}
Due to the lemma, algebra $\bar C_{\rm fin}(M_a)$ admits a continuous norm-preserving extension to $M(H^\infty)$. Since the former algebra is self-adjoint with respect to the complex conjugation, this extension coincides with $C(M(H^\infty))$ by the Stone-Weierstrass theorem. In particular, the maximal ideal space $M(\bar C_{\rm fin}(M_a))$ is homeomorphic to $M(H^\infty)$. On the other hand, it is homeomorphic to the Freudenthal compactification $\gamma(M_a)$ of $M_a$. This completes the proof of the theorem.
\end{proof}

\sect{Appendix: $H^\infty$ is a Projective Free Algebra}
We use that $A$
is projective free iff every nonzero square idempotent matrix with entries in $A$ is similar
(by an invertible matrix with entries in $A$) to a matrix of the form
$$
\textrm{diag}(I_k,0):=\left[ \begin{array}{cc} I_k & 0 \\ 0 & 0 \end{array} \right],\quad k\in\mathbb N,
$$
where $I_k$ is the identity $k\times k$ matrix  (see \cite[Prop.~2.6]{Co}).

By $H_n^\infty$ we denote the $H^\infty$-module consisting of
columns $(f_1,\dots, f_n)$, $f_i\in H^\infty$. An
$H^\infty$-invariant subspace of $H_n^\infty$ is called a {\em
  submodule}. The following result can be deduced from the classical Beurling-Lax-Halmos theorem 
  (see, e.g., \cite[ p.\,1025]{To}).\smallskip

\noindent (BLH) {\em Let $M\subset H_n^\infty$ be a nonzero weak$^*$ closed submodule. Then $M=H\cdot H_k^\infty$ for some $1\le k\le n$, where $H$ is a $n\times k$ left unimodular matrix with entries in $H^\infty$, i.e., $(H(e^{it}))^*\cdot H(e^{it})=I_k$ for a.e. $t\in [0,2\pi]$.}
 
 \begin{Theo}[\cite{Q, BS}]
 $H^\infty$ is a projective free algebra.
 \end{Theo}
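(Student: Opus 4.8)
The plan is to verify the idempotent-similarity criterion recalled above (\cite[Prop.~2.6]{Co}): it suffices to show that every nonzero idempotent $P\in M_n(H^\infty)$ (so $P^2=P$) is similar over $H^\infty$ to $\mathrm{diag}(I_k,0)$ for some $k$. The idea is to analyse the image and the kernel of $P$ acting on the free module $H_n^\infty$ by means of the Beurling--Lax--Halmos representation (BLH), and to glue the resulting inner generators into a change-of-basis matrix.

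First I would record that $P$ commutes with multiplication by scalars from $H^\infty$, so that $\mathrm{im}\,P$ and $\ker P=\mathrm{im}(I-P)$ are submodules of $H_n^\infty$. The key point allowing (BLH) to be applied is that these submodules are weak$^*$ closed: since multiplication by a fixed matrix over $H^\infty$ is weak$^*$ continuous on $H_n^\infty$ (its pre-adjoint is the corresponding multiplication on the predual $L^1/H_0^1$ of $H^\infty$), the set $\mathrm{im}\,P=\ker(I-P)$ is the kernel of a weak$^*$ continuous map, hence weak$^*$ closed; likewise for $\ker P$. If $P=I$ we are done with $k=n$, while $P\ne 0$ rules out $\mathrm{im}\,P=0$; so assume both $\mathrm{im}\,P$ and $\ker P$ are nonzero.

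Applying (BLH) then gives $\mathrm{im}\,P=H\cdot H_k^\infty$ with $H$ an $n\times k$ inner (left unimodular) matrix, and $\ker P=H'\cdot H_m^\infty$ with $H'$ an $n\times m$ inner matrix. The map $v\mapsto Hv$ is an isomorphism $H_k^\infty\to\mathrm{im}\,P$ of $H^\infty$-modules: it is onto by construction, and injective because $Hv=0$ forces $v=(H^*H)v=0$ a.e.\ on the boundary. Hence $\mathrm{im}\,P$ is free of rank $k$, and similarly $\ker P$ is free of rank $m$; since $H_n^\infty=\mathrm{im}\,P\oplus\ker P$ internally (as $v=Pv+(I-P)v$ with both summands again in $H_n^\infty$), comparing ranks gives $m=n-k$. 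Setting $S:=[\,H\mid H'\,]\in M_n(H^\infty)$ and using $PH=H$, $PH'=0$ one obtains $PS=S\,\mathrm{diag}(I_k,0)$, so it only remains to see that $S$ is invertible over $H^\infty$.

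This last point is where I expect the main obstacle to lie, and it must be argued at the level of modules rather than of determinants: knowing merely that $\det S$ is nonvanishing (even bounded below) on the unit circle would not preclude interior zeros and hence would not give $(\det S)^{-1}\in H^\infty$. Instead I would observe that, because the internal splitting $v\mapsto(Pv,(I-P)v)$ is an $H^\infty$-module map and the BLH parametrisations are $H^\infty$-module isomorphisms, the assignment $v\mapsto(a,b)$ determined by $v=Ha+H'b$ is itself given by a matrix over $H^\infty$; this matrix is a two-sided inverse of $S$. Equivalently, $S$ realises the module isomorphism $H_k^\infty\oplus H_{n-k}^\infty\xrightarrow{\sim}H_n^\infty$, whence $S\in GL_n(H^\infty)$ and $S^{-1}PS=\mathrm{diag}(I_k,0)$, completing the verification of the criterion and thereby the projective freeness of $H^\infty$. (Alternatively, one may bypass the similarity criterion altogether: the isomorphism $\mathrm{im}\,P\cong H_k^\infty$ already shows directly that every finitely generated projective $H^\infty$-module is free.)
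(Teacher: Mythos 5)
Your proof is correct and follows the same skeleton as the paper's: Cohn's idempotent-similarity criterion, (BLH) applied to the weak$^*$ closed submodules $\mathrm{im}\,P=\ker(I-P)$ and $\ker P=\mathrm{im}(I-P)$, and conjugation by the assembled matrix $S=(H,H')$ (the paper's $H=(H_1,H_2)$, with $P=F$). The genuine divergence is in how the two delicate steps are settled. To fix the block sizes and invert $S$, the paper argues spectrally: left unimodularity gives left invertibility of $\hat H_1(\xi)$ at points $\xi$ of the \v{S}ilov boundary; constancy of $\mathrm{rank}\,\hat F$ (connectedness of $M(H^\infty)$) then forces $k=\mathrm{rank}(F)$ and left invertibility of $\hat H_1,\hat H_2$ at \emph{every} point of $M(H^\infty)$, so that $\hat S$ is pointwise invertible and Gelfand theory inverts $\det S$ in $H^\infty$. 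You argue algebraically instead: the sizes match by invariant basis number for the commutative ring $H^\infty$ (from $H_n^\infty\cong H_k^\infty\oplus H_m^\infty$), and $S$ is invertible because its inverse is literally the $H^\infty$-module map $v\mapsto(a,b)$ determined by the unique decomposition $v=Ha+H'b$, every $H^\infty$-module endomorphism of $H_n^\infty$ being given by a matrix over $H^\infty$. Your route dispenses with the \v{S}ilov boundary, the connectedness of $M(H^\infty)$, and Gelfand theory altogether (and your warning that nonvanishing of $\det S$ on the circle alone would not suffice is precisely the obstruction the paper overcomes by propagating pointwise invertibility from the \v{S}ilov boundary to all of $M(H^\infty)$); the paper's route, in exchange, yields the extra geometric information that $k$ is the pointwise rank of $\hat F$ and that $\hat H_1,\hat H_2$ are left invertible on the whole maximal ideal space. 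Your closing parenthetical is also sound: since every finitely generated projective $H^\infty$-module is (up to isomorphism) the image of an idempotent matrix acting on some $H_n^\infty$, the isomorphism $\mathrm{im}\,P\cong H_k^\infty$ already proves freeness directly, bypassing the similarity criterion.
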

\begin{proof} (We follow the arguments in  \cite{BS}.) Let $F$ be a nontrivial
  idempotent of size $n\times n$ with entries in $H^\infty$. By 
  definition, $F$ determines a weak$^*$ continuous linear operator
  $H_n^\infty\rightarrow H_n^\infty$ such that $M_1:=\textrm{im}(F)=\ker(I_n-F)$. Hence, $M_1\subset H_n^\infty$ is a weak$^*$ closed submodule.  According to (BLH)  $M_1=H_1\cdot H_k^\infty$, where $H_1$ is a $n\times k$ left unimodular matrix with entries in $H^\infty$. In particular, $\hat H_1(\xi)$ is left invertible at any point $\xi$ of the \v{S}ilov boundary of $M(H^\infty)$.
 Since $F$ has the same rank at each point
  of $M(H^\infty)$ (as $M(H^\infty)$ is connected),
  the invertibility of $\hat H_1(\xi)$ implies $k={\rm rank}(F)$. Thus,
  $\hat H_1(\xi)$ is left invertible for all $\xi\in M(H^\infty)$.

 Similarly, $M_2:=\ker(F)=\textrm{im}(I-F)\subset H_n^\infty$ is a weak$^*$ closed submodule. So,  $M_2=H_2\cdot H_{n-k}^\infty$, where $H_2$ is a $n\times (n- k)$ matrix with entries in $H^\infty$ such that $\hat H_2$ is left invertible at each point of $M(H^\infty)$.
  From the fact $M_1\cap M_2=\{0\}$ follows that the $n\times n$ matrix
  $H=(H_1, H_2)$  with entries in $H^\infty$  is invertible and
  $H^{-1}$ has entries in $H^\infty$ as well. Moreover, $H^{-1}\cdot F\cdot
  H=\textrm{diag}(I_k,0)$. 
  \end{proof}

{}

\end{document}